\newtheorem{theorem}{Theorem}[section]
\newtheorem{lemma}[theorem]{Lemma}
\newtheorem{proposition}[theorem]{Proposition}
\newtheorem{corollary}[theorem]{Corollary}
\theoremstyle{definition}
\newtheorem{definition}[theorem]{Definition}
\newtheorem{remark}[theorem]{Remark}
\newcommand{\N}{\mathbb{N}}
\newcommand{\Z}{\mathbb{Z}}
\newcommand{\ignore}[1]{}
\NewDocumentCommand{\cycle}{ O{\;} m }
 {
  (
  \alec_cycle:nn { #1 } { #2 }
  )
 }
\title{\vspace*{-0.1in}Palindromes in finite groups and the Explorer-Director game}
\date{March 31, 2019}
\author{Dagur T\'omas \'Asgeirsson\thanks{Faculty of Physical Sciences, University of Iceland, Reykjav\'ik \qquad \texttt{dta1@hi.is}} \and Pat Devlin\footnote{Dept.\ of Mathematics, Yale University, New Haven CT \qquad \texttt{patrick.devlin@yale.edu}}}
\begin{document}
\renewcommand{\thefootnote}{\fnsymbol{footnote}}
\footnotetext{AMS 2010 subject classification: 05E15, 20F10, 20F18, 20F69}
\footnotetext{Key words and phrases:  Explorer-Director game, Magnus-Derek game, twisted subgroups, palindromes, combinatorial group theory, nilpotent groups}
\maketitle

\begin{abstract}
In this paper, we use the notion of \textit{twisted subgroups} (i.e., subsets of group elements closed under the binary operation $(a,b) \mapsto aba$) to provide the first structural characterization of optimal play in the \textit{Explorer-Director game}, introduced as the \textit{Magnus-Derek game} by Nedev and Muthukrishnan and generalized to finite groups by Gerbner.  In particular, we reduce the game to the problem of finding the largest proper twisted subgroup, and as a corollary we resolve the Explorer-Director game completely for all nilpotent groups.
\end{abstract}
\renewcommand*{\thefootnote}{\alph{footnote}}
\section{Introduction}\label{section introduction}
In \cite{magnus-derek-original}, Nedev and Muthukrishnan introduced the following game between two players, whom we call Explorer and Director.\footnote{This was originally called the \textit{Magnus-Derek game} (Magnus determining the magnitude and Derek the direction).  However, we choose to say \textit{Explorer-Director game}, which we feel is more descriptive.}  The game is played in rounds moving a token around a circular table with $n$ labeled positions.  Each round, Explorer names a distance by which the token is to be moved, and Director then determines a direction and moves the token the specified amount.  Explorer's goal is to minimize the number of unvisited positions while Director's is to maximize this number.  
It was shown in \cite{magnus-derek-original} that the number of positions visited on an $n$ element table assuming optimal play is 
\[
f^*(n) = \begin{cases}n, \qquad &\text{if $n$ is a power of $2$,}\\ n(1 - 1/p), \qquad &\text{where $p$ is the smallest odd prime factor dividing $n$}.\\ \end{cases}.
\]

Motivated by algorithmic aspects of the problem, this was extended by Hurkens, Pendavingh, and Woeginger \cite{MR2466788} addressing the question of how efficiently Explorer can reach $f^*(n)$ positions.  This direction was further developed by Nedev \cite{MR2725450, MR3187798} as well as by Chen, Lin, Shieh, and Tsai \cite{MR2778466}, who also introduced some variants of the game.

In 2013, Gerbner \cite{magnus-derek-groups} posed an algebraic generalization where the game positions are elements of some finite group, $G$.  Each round, Explorer picks an element $g \in G$ and Director decides whether to right-multiply the current position by $g$ or $g^{-1}$.  In this language, the original game is the special case that $G$ is the cyclic group $\Z/n\Z$.

Let $f(G)$ denote the number of group elements that are visited assuming optimal play in a group $G$.  For abelian groups, Gerbner proved $f(G) = f^* (|G|)$, directly generalizing the result of \cite{magnus-derek-original}.  Moreover, for $2$-groups and for groups generated by involutions, he showed $f(G) = |G|$.  For general groups, nothing else has been established.

In this paper, we make progress in understanding $f(G)$ by relating the game to the following natural notion in combinatorial group theory \cite{nearSubgroups}.

\ignore{
\begin{definition}\label{def closed under roots}
Let $G$ be a group and $R \subseteq G$. We say that $R$ is \emph{closed under square roots} iff $ax, ax^{-1} \in R$ implies $a \in R$.
\end{definition}
}

\begin{definition}\label{def palindromic}
Let $G$ be a group and $P \subseteq G$. We say $P$ is a \emph{twisted subgroup of $G$} iff $P$ contains the identity and $aba\in P$ whenever $a,b\in P$.
\end{definition}

Twisted subgroups date back to Glauberman \cite{loops1, loops2}, and they have since been studied in connection with certain algebraic optimization problems (see \cite{feder, nearSubgroups} as important examples and \cite{twistedSurvey} for a survey).

For $|G|$ odd, this definition enables us to provide the following \textit{structural characterization} of which subsets can arise as the set of unvisited positions assuming optimal play.  In doing so, we essentially characterize the best strategies for each player, and we are able to reduce the study of $f(G)$ to a question regarding twisted subgroups.
\begin{theorem}\label{theorem magnus derek palindrome}
Let $G$ be a group of odd order.  If $U$ is attainable as a set of unvisited positions under optimum play, then $U$ is a coset of a proper twisted subgroup.  Moreover, if $C$ is a coset of a twisted subgroup and if the token does not start in $C$, then there is a strategy for Director that prevents the token from visiting any of the elements of $C$.
\end{theorem}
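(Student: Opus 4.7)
The plan is to prove the two parts separately, starting with the (constructive) Director's strategy in the ``moreover'' clause and then tackling the structural characterization of attainable unvisited sets.

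For Director's strategy, fix a coset $C = xP$ of a twisted subgroup $P$ with starting position $g_0 \notin C$, and maintain the invariant that the token stays outside $C$. The heart of the matter is the following lemma: \emph{in a group of odd order, if $P$ is a twisted subgroup and $uh, uh^{-1} \in P$, then $u \in P$.} Granted this, whenever the token is at $g \notin C$ and Explorer plays $h$, the element $u := x^{-1}g$ is not in $P$, so at least one of $gh, gh^{-1}$ must lie outside $C$, and Director plays there. To prove the lemma, I first show $P$ is closed under inverses: iterating $(a,b) \mapsto aba$ starting from $a, e \in P$ produces every positive power $a^n \in P$, and since $a$ has odd order $m$ (as $|G|$ is odd), $a^{-1} = a^{m-1} \in P$. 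Then, with $uh, uh^{-1} \in P$, closure under inverses gives $hu^{-1} \in P$, so $(uh)(hu^{-1})(uh) = uh^{3} \in P$; more generally, the identity $(uh^{\alpha})(uh^{\beta})^{-1}(uh^{\alpha}) = uh^{2\alpha - \beta}$ and iteration give $uh^{2k+1} \in P$ for every $k \in \Z$. Since $h$ has odd order $m$, choosing $2k + 1 \equiv 0 \pmod{m}$ yields $u \in P$.

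For the structural characterization, suppose $U$ is attainable as the unvisited set under optimal play, and let $V := G \setminus U$. Optimality of Director forces that, in the steady state, no new positions are ever visited: for every $v \in V$ and every $h \in G$, at least one of $vh, vh^{-1}$ lies in $V$. Equivalently, for any $u_1, u_2 \in U$, the unique $v$ satisfying $u_1 = vh$, $u_2 = vh^{-1}$ (with $h = \sqrt{u_2^{-1}u_1}$, the unique square root since $|G|$ is odd) must also lie in $U$; that is, $U$ is closed under the operation $(u_1, u_2) \mapsto u_2 \sqrt{u_2^{-1}u_1}$. Fix $u_0 \in U$ and set $P := u_0^{-1}U$, so $e \in P$ and the closure condition translates to $b\sqrt{b^{-1}a} \in P$ whenever $a, b \in P$.

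It remains to show that $P$ is a twisted subgroup. Writing $a = bc$ with $c = b^{-1}a$, the condition says $b\sqrt{c} \in P$ whenever $b, bc \in P$; applied to two elements $bc^{r_1}, bc^{r_2} \in P$, it yields $bc^{(r_1+r_2)/2} \in P$, so the exponent set $S := \{r \in \Z/\operatorname{ord}(c)\Z : bc^r \in P\}$ contains $\{0,1\}$ and is closed under averaging. Iterating generates every dyadic fraction $p/2^k$, and since $\operatorname{ord}(c)$ is odd, these exhaust $\Z/\operatorname{ord}(c)\Z$. In particular $r = 2$ gives $bc^2 = ab^{-1}a \in P$; specializing to $a = e$ yields $b^{-1} \in P$, so $P$ is closed under inverses. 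Substituting $b \to b^{-1}$ in $ab^{-1}a \in P$ then gives $aba \in P$, confirming $P$ is a twisted subgroup. Thus $U = u_0 P$ is a coset, and $P$ is proper because $g_0 \in V$ forces $u_0^{-1}g_0 \notin P$. The main obstacle I anticipate is justifying the steady-state closure condition for \emph{every} $v \in V$, not merely those positions revisited after stabilization: failure of the condition at some $v \in V$ should let Explorer steer the token back to $v$ and play the offending $h$, forcing a new visit and contradicting optimality of $V$, but formalizing this manoeuvre requires a careful game-theoretic argument.
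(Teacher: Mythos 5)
Your ``moreover'' half is correct and self-contained: your lemma that $uh, uh^{-1} \in P$ forces $u \in P$ is exactly the statement that a coset of a twisted subgroup is closed under the paper's notion of betweenness, and your odd-exponent iteration (all $uh^{2k+1} \in P$, then use that the order of $h$ is odd) proves it; the paper's Proposition \ref{proposition closed under betweenness iff palindromic} does the same thing by exhibiting the square root as an explicit palindromic word. Likewise, your derivation that the closure condition $b\sqrt{b^{-1}a} \in P$ forces $u_0^{-1}U$ to be a twisted subgroup (averaging exponents, dyadic fractions exhausting $\Z/\operatorname{ord}(c)\Z$ since the order is odd, then $ab^{-1}a \in P$, inverses, and finally $aba \in P$) is sound and matches the paper's forward implication in spirit.

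The genuine gap is the step you flag yourself and do not supply: why the unvisited set $U$ under optimal play satisfies the closure condition at \emph{every} $v \notin U$. The ``steer the token back to $v$ and play the offending $h$'' idea does not work as stated. That $v$ was visited is a historical fact about one particular play; if Explorer had deviated at that moment, the entire future of the game (and hence the final unvisited set) would change, so forcing one extra visit into what ``ends up being'' $U$ gives no immediate contradiction with optimality. And after the visited set stabilizes, you would still need to show Explorer can force the token back to an arbitrary $v \notin U$, which is not obvious from the play that actually occurred. The paper resolves precisely this point with an auxiliary device: the \emph{open} game in which Director declares the forbidden set in advance, shown in Proposition \ref{proposition games equivalent} to have the same value as the original game, together with a maximality argument (Lemma \ref{lemma maximal sets in modified game}): if $U$ is a \emph{maximal} set Director can defend in the open game, then Explorer can reach every $b \notin U$ (otherwise Director could also defend $U \cup \{b\}$, contradicting maximality), and at such a $b$ Director must have a safe reply to every $g$ Explorer names, so $b$ is not between two elements of $U$. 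An unvisited set attainable under optimal play has the maximum possible size $|G| - f(G) = |G| - \tilde f(G)$ and is defendable from the terminal position, hence is such a maximal set, and the closure condition follows. Without some argument of this kind --- either the open-game reformulation or a careful subgame-perfectness argument you would have to write out --- your forward direction is incomplete, while everything downstream of the closure condition in your write-up is fine.
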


\begin{corollary}\label{corollary from main theorem odd}
Let $G$ be a group of odd order.  Then $f(G) = |G| - \max_{P} |P|$, where the maximum is taken over all proper twisted subgroups of $G$.
\end{corollary}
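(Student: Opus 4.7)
The plan is to deduce the corollary directly from the two assertions of Theorem~\ref{theorem magnus derek palindrome}. Writing $U$ for the set of positions left unvisited under optimum play, so that $f(G) = |G| - |U|$, I will prove the matching inequalities $|U|\le \max_{P}|P|$ and $|U|\ge \max_{P}|P|$, where the maximum is over all proper twisted subgroups of $G$.

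For the upper bound on $|U|$, I invoke the first assertion of Theorem~\ref{theorem magnus derek palindrome}: under optimum play $U$ is a coset of some proper twisted subgroup $P$, and since translation by a fixed group element is a bijection, $|U|=|P|\le \max_P |P|$. This is equivalent to $f(G)\ge |G|-\max_P|P|$.

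For the reverse inequality, I fix a proper twisted subgroup $P^*$ attaining the maximum and produce a coset of $P^*$ that avoids the starting position $s$ of the token. Since $P^*\subsetneq G$ there is some $y\in G\setminus P^*$; taking $C=(sy^{-1})P^*$ gives a set of size $|P^*|$, and $s\in C$ would force $y\in P^*$, a contradiction. The \emph{moreover} clause of Theorem~\ref{theorem magnus derek palindrome} then supplies a Director strategy that prevents the token from ever visiting any element of $C$, so against any Explorer play at least $|P^*|=\max_P|P|$ positions remain unvisited, giving $f(G)\le |G|-\max_P|P|$.

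The only step requiring even a moment's thought is the coset-selection at the start of the lower-bound argument: a twisted subgroup need not be a subgroup, so one cannot simply quote Lagrange to partition $G$ into equal-sized cosets of $P^*$ and observe that some coset misses $s$. Choosing $y\notin P^*$ and using the explicit left translate $(sy^{-1})P^*$ bypasses this without any structural assumption on $P^*$ beyond properness. Combining the two bounds then yields the claimed equality $f(G) = |G| - \max_P|P|$.
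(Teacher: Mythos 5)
Your proposal is correct and matches the paper's intended derivation: the corollary is exactly the two-inequality deduction from the two clauses of Theorem~\ref{theorem magnus derek palindrome} (the paper leaves this step implicit after proving the theorem via Lemma~\ref{lemma maximal sets in modified game} and Proposition~\ref{proposition closed under betweenness iff palindromic}). Your explicit choice of the translate $(sy^{-1})P^*$ to avoid the starting position is a nice touch, since Lagrange-style coset partitions are indeed unavailable for twisted subgroups.
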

We also obtain a similar but more nuanced characterization for all $G$ (Lemma \ref{lemma maximal sets in modified game}), but instead of appealing to that corresponding result, it is more convenient for us to reduce the problem of finding $f(G)$ to the case of $|G|$ odd via the following theorem.

\begin{theorem}\label{theorem reducing to odd}
Let $G$ be a finite group and $\Gamma$ the subgroup of $G$ generated by elements whose orders are powers of $2$.  Then $\Gamma \lhd G$, $|G / \Gamma|$ is odd, and $f(G) = f(G / \Gamma) |\Gamma|$.
\end{theorem}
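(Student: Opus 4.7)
The plan is to verify the three claims in order: normality of $\Gamma$, oddness of the quotient, and the product formula.

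The two structural claims are routine. Since conjugation preserves element orders, the set of elements of $2$-power order is stable under inner automorphisms, so $\Gamma$ is characteristic and in particular normal. To see $|G/\Gamma|$ is odd, suppose for contradiction that some $\bar g \in G/\Gamma$ has order $2^k$ with $k \ge 1$; lift to $g \in G$ and write $\operatorname{ord}(g) = 2^a m$ with $m$ odd. Then $g^m$ has $2$-power order, so $g^m \in \Gamma$, giving $\bar g^m = \bar e$. Thus $\operatorname{ord}(\bar g)$ divides $m$, but $m$ is odd and $\operatorname{ord}(\bar g) = 2^k$, a contradiction.

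For the upper bound $f(G) \le f(G/\Gamma)|\Gamma|$, I would have Director ``play the quotient game.'' Each move $g \in G$ that Explorer names projects to $\bar g \in G/\Gamma$, and Director's binary choice of $g^{\pm 1}$ projects to a choice of $\bar g^{\pm 1}$ in $G/\Gamma$. Director executes an optimal strategy for the game on $G/\Gamma$, which caps the number of cosets of $\Gamma$ that ever contain the token at $f(G/\Gamma)$; since each coset contributes at most $|\Gamma|$ positions, the bound follows.

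For the matching lower bound, I would have Explorer interleave a \emph{within-coset} strategy and a \emph{cross-coset} strategy. The cross-coset strategy is an optimal Explorer strategy in the quotient game on $G/\Gamma$, invoked only at moments when the current coset of $\Gamma$ is already fully explored; at all other times Explorer plays elements of $\Gamma$ dictated by a strategy that sweeps out the entire current coset (a within-coset game isomorphic, up to left-translation, to the game on $\Gamma$ itself). Elements of $\Gamma$ project to the identity in $G/\Gamma$, so the cross-coset moves and Director's responses to them form a legal play of the quotient game, in which Explorer forces at least $f(G/\Gamma)$ distinct cosets to be entered. Each entered coset is then fully explored by the within-coset strategy, yielding at least $f(G/\Gamma)|\Gamma|$ visited positions overall.

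The principal obstacle is the within-coset lemma $f(\Gamma) = |\Gamma|$. Since $\Gamma$ need not be a $2$-group (for example $\Gamma = S_3$ can arise) nor generated by involutions (consider $\Z/4 \times S_3$, where the involutions generate only an index-$2$ subgroup), neither of Gerbner's two known cases directly applies. I would prove this as a separate lemma by exploiting a generating set $g_1, \ldots, g_r$ of $2$-power-order elements of $\Gamma$: each $\langle g_i\rangle$ is cyclic of $2$-power order, so repeatedly playing $g_i$ lets Explorer sweep out any coset of $\langle g_i\rangle$ it currently occupies (Gerbner's $2$-group argument), and by induction on $r$ these sweeps can be chained across the Cayley graph of $\Gamma$ to cover all of $\Gamma$. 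The delicate point is ordering the sweeps so that playing a later generator does not strand the token outside the portion already covered.
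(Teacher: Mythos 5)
Your overall architecture is exactly the paper's: normality and oddness of the quotient are routine, and the product formula is obtained from the sandwich $f(\Gamma)\,f(G/\Gamma) \le f(G) \le |\Gamma|\,f(G/\Gamma)$, with Director simulating the quotient game for the upper bound and Explorer interleaving within-coset and cross-coset play for the lower bound (this is the paper's Lemma \ref{lemma coset upper bound}); those parts are fine (your oddness argument implicitly uses Cauchy to pass from ``no element of even order'' to ``odd order,'' which is harmless). The genuine gap is the crucial lemma $f(\Gamma) = |\Gamma|$, which you rightly observe is not covered by Gerbner's two known cases, but which you leave as a sketch with an acknowledged unresolved ``delicate point.'' As you formulate it---sweep cosets of the cyclic $2$-groups $\langle g_i\rangle$ and chain the sweeps by induction on the number of generators---the difficulty is real: a sweep of $x\langle g_i\rangle$ obtained by playing the cyclic $2$-group game leaves the token wherever Director's replies happen to put it, so you have no way to steer the token to the particular coset you next wish to sweep; and ``repeatedly playing $g_i$'' does not even produce a sweep, since Director can answer $g_i, g_i^{-1}, g_i, \ldots$ and oscillate the token between two positions.

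The missing idea (the paper's Lemma \ref{lemma equality for powers of two}) is a stronger, deterministic single-step primitive: if $t$ has order $2^k$ and the token is at $x$, Explorer can force the token to land exactly at $xt$ by naming $t, t^2, t^4, \ldots, t^{2^{k-1}}$ in order. Indeed, if Director always avoids $xt$, then after responding to $t^{2^i}$ the token is at $xt^{1-2^{i+1}}$, so when $t^{2^{k-1}}$ is named the token sits at $xt^{1-2^{k-1}}$ and both of Director's options send it to $xt^{1-2^k} = xt$. Since every element of $\Gamma$ is a word in elements of $2$-power order (inverses of such elements again have $2$-power order), this primitive gives Explorer complete control of the token's route: she can visit the elements of $\Gamma$ one by one in any prescribed order, so $f(\Gamma) = |\Gamma|$ with no induction on generators and no stranding issue at all. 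With that lemma in hand, your sandwich argument (the paper's Lemma \ref{lemma coset upper bound} with $K = \Gamma$) does complete the proof.
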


This enables us to focus on twisted subgroups in groups of odd order (the even order case being more complicated for a variety of reasons discussed in the conclusion).  Although every subgroup is in fact a twisted subgroup, unfortunately there are twisted subgroups that are not closed under the group operation.  Nonetheless, we have the following result of Glauberman.

\begin{theorem}\label{theorem palindromic divides}\cite{loops1}
If $P \subseteq G$ is a twisted subgroup and $|G|$ is odd, then $|P|$ divides $|G|$.
\end{theorem}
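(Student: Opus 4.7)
I would prove this by translating the twisted-subgroup condition into a subloop condition inside a Bruck loop built on $G$, and then invoking Glauberman's Lagrange-type theorem for such loops.

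First, because $|G|$ is odd, every element has odd order and the squaring map on $G$ is a bijection, so each $g \in G$ has a unique square root $g^{1/2}$. A short induction using $a^{k+2} = a \cdot a^k \cdot a$ together with $e \in P$ shows that $a \in P$ implies $a^k \in P$ for every integer $k$; hence $\langle a\rangle \subseteq P$, and in particular $a^{1/2} \in P$ and $a^{-1}\in P$.

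Next, equip $G$ with the operation $x \circ y := x^{1/2}\, y\, x^{1/2}$. It is classical that $(G, \circ)$ is a Bruck loop (a left Bol loop with the automorphic inverse property), sharing its identity and two-sided inverses with the ambient group $G$. The previous step shows that $(P,\circ)$ is a subloop: for $x,y\in P$, we have $x^{1/2}\in P$, and then $x\circ y = x^{1/2}\,y\,x^{1/2}\in P$ by the twisted-subgroup axiom applied with $a = x^{1/2}$ and $b = y$. Closure of $P$ under inverses and containment of $e$ were already verified in the first step, so $(P,\circ)$ is indeed a sub-Bruck-loop of $(G,\circ)$.

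Finally, I would invoke Glauberman's theorem from \cite{loops1}: in any Bruck loop of odd order, the order of every subloop divides the order of the loop. Applying this to $(P,\circ)\le(G,\circ)$ immediately yields $|P|\mid |G|$.

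The main obstacle is this last step. Lagrange's theorem fails for loops in general, and Glauberman's proof for odd-order Bruck loops is itself the substantive content of \cite{loops1}: it proceeds by constructing an enveloping group into which the loop embeds in a way compatible with subloops, so that the classical Lagrange theorem can then be applied. With that result in hand as a black box, the reduction from twisted subgroups to sub-Bruck-loops is almost formal and rests entirely on the bijectivity of squaring in odd order.
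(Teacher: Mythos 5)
Your reduction is sound: closure of $P$ under all powers (via $a\cdot a^k\cdot a=a^{k+2}$ and $1_G\in P$), hence under inverses and the unique square roots available in odd order, does make $P$ closed under $x\circ y:=x^{1/2}yx^{1/2}$, and since $P$ is finite, contains the identity, and is closed under $\circ$ and inversion, it is a genuine subloop of the Bruck loop $(G,\circ)$; Glauberman's Lagrange theorem for Bruck loops of odd order then gives $|P|\mid|G|$. However, this is a genuinely different route from the one taken in Section \ref{section palindromes}, and you should be aware of what it costs: the black box you invoke is, for all practical purposes, the theorem itself in its original loop-theoretic formulation (the dictionary between odd-order twisted subgroups and Bruck loops is exactly how \cite{loops1} is usually quoted), so your argument amounts to a careful citation of the source already credited in the statement of Theorem \ref{theorem palindromic divides}, whereas the paper's stated goal there is a self-contained proof. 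The paper instead argues by induction on $|G|$ within group theory alone: assuming $P$ generates $G$, it takes the Fink--Thom normal subgroup $H=\{|\overline{w}|: |w|=1_G\}$ of reversed relators \cite{fink}, shows $H\subseteq P$ and $HP=P$ so that the case $H\neq\{1_G\}$ follows by induction in $G/H$, and in the case $H=\{1_G\}$ builds the order-two automorphism $\psi(|w|)=|\overline{w}|^{-1}$, which inverts $P$ pointwise and whose fixed subgroup $N$ yields a unique factorization $G=NP$, giving $|P|=|G|/|N|$ directly. Your approach buys brevity and places the result in its natural loop-theoretic context; the paper's buys an elementary, self-contained argument (essentially reconstructing the group-with-involution mechanism underlying Glauberman's proof) without importing any loop theory. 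If you want your version to stand as a proof rather than a reduction, you would need to supply the proof of Lagrange for odd-order Bruck loops, at which point you would be reproducing much of \cite{loops1}.
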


This shows that for odd order groups, $f^* (|G|) \leq f(G)$, compared to the upper bound $f(G) \leq |G| - \max_{H} |H|$, where the maximum is taken over all proper subgroups of $G$.  For odd groups having a subgroup of minimum prime index, these bounds coincide.  In particular, we obtain the following clean statement for nilpotent groups.
\begin{theorem}\label{theorem nilpotent any not nec odd}
If $G$ is nilpotent (with $|G|$ not necesssarily odd), $f(G) = f^* (|G|)$.
\end{theorem}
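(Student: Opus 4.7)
The plan is to reduce to the odd order case via Theorem \ref{theorem reducing to odd} and then use Corollary \ref{corollary from main theorem odd} together with Theorem \ref{theorem palindromic divides}, exploiting the fact that nilpotent groups decompose as direct products of their Sylow subgroups.

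First I would let $\Gamma$ denote the subgroup of $G$ generated by elements of $2$-power order, and observe that since $G$ is nilpotent, $G$ is the internal direct product of its Sylow subgroups; consequently $\Gamma$ coincides with the Sylow $2$-subgroup of $G$, and $H := G/\Gamma$ is isomorphic to the direct product of the odd Sylow subgroups of $G$. In particular $H$ is a nilpotent group of odd order equal to the odd part of $|G|$. Theorem \ref{theorem reducing to odd} then gives $f(G) = f(H)\,|\Gamma|$.

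If $H$ is trivial, then $G$ is a $2$-group and $f(G) = |G| = f^*(|G|)$ directly from the definition, so assume $|H| > 1$ and let $q$ be the smallest prime dividing $|H|$; note that $q$ is automatically the smallest odd prime dividing $|G|$. By Corollary \ref{corollary from main theorem odd}, $f(H) = |H| - M$, where $M$ is the maximum size of a proper twisted subgroup of $H$. Theorem \ref{theorem palindromic divides} forces $M$ to divide $|H|$, so $M \le |H|/q$. For the matching lower bound, I would use nilpotency once more: the Sylow $q$-subgroup of $H$ is a nontrivial $q$-group and therefore has a (normal) maximal subgroup of index $q$; pulling this back through the direct product decomposition yields a proper subgroup of $H$ of index exactly $q$, which is in particular a twisted subgroup. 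Hence $M = |H|/q$ and $f(H) = |H|(1 - 1/q)$.

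Putting the pieces together gives $f(G) = |H|(1 - 1/q)\,|\Gamma| = |G|(1 - 1/q) = f^*(|G|)$, as required. There is no real obstacle in this argument; it is essentially a bookkeeping exercise assembling the three earlier results. The only subtlety to flag is the degenerate $2$-group case, where the formula $|G|(1-1/q)$ does not apply and one must instead read $f^*(|G|) = |G|$ directly from the definition.
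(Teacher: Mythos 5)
Your proposal is correct and follows essentially the same route as the paper: reduce to the odd-order quotient $G/\Gamma$ via Theorem \ref{theorem reducing to odd} (noting $\Gamma$ is the Sylow $2$-subgroup so the relevant smallest odd prime is unchanged), then squeeze $f(G/\Gamma)$ between the divisibility bound from Theorem \ref{theorem palindromic divides} and the subgroup-of-index-$q$ bound coming from Corollary \ref{corollary from main theorem odd}, using nilpotency to produce that index-$q$ subgroup. Your handling of the $2$-group case and your explicit Sylow-product construction of the index-$q$ subgroup are only cosmetic variations on the paper's argument.
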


\subsection*{Structure of paper}
We begin in Section \ref{section even}, where we relate the game on $G$ to the game on its quotient groups, which leads to a proof of Theorem \ref{theorem reducing to odd} (thus reducing the problem to the odd order case).  In Section \ref{section odd}, we then introduce an equivalent auxilliary game, which turns out to be easier to analyze.  For odd order groups, we make the connection to twisted subgroups, and we prove Theorem \ref{theorem magnus derek palindrome} and Corollary \ref{corollary from main theorem odd}.  For completeness, we provide a self-contained proof of Theorem \ref{theorem palindromic divides} in Section \ref{section palindromes}.  We conclude in section \ref{section conclusion} with a discussion of open problems and some cautionary examples illustrating the falsehood of several natural conjectures.

\ignore{\begin{itemize}
\item Theorem that $f(G) = f(G/\Gamma) |\Gamma|$ for $\Gamma$ the subgroup generated by elements of $G$ having order a power of $2$.
\item Theorem about $\max_{N} |G| - |N| \leq f(G) = \max_{P} |G| -|P| \leq |G| (1- 1/p)$ for $|G|$ odd
\item Theorem (true!?) that assuming optimal play, at the end of Magnus-Derek game, the set of unseen elements is closed under ``between-ness."  Moreover, Derek can keep the token out of any such set.
\begin{itemize}
\item It \textit{is} true that if Derek can win open game and $N$ is maximal with this property, then $N$ is closed under between-ness.
\item It is also true that if in the closed game, Magnus ends up seeing $N^c$, then he cannot see $N$ in the open game either (else she could pretend they're playing open on $N$ and get into it).  So when Magnus is done in the closed game, the set he hasn't seen is a maximal set $N$ such that Derek can win the open game with $N$.  So when Magnus is done, the set he hasn't seen is closed under between-ness.
\item Finally, if the set is closed under between-ness, Derek can keep him out of it (even in the open game).
\item So yes!  Derek can win open game with $B$ iff $B$ is closed under between-ness and $B$ doesn't include starting spot.  And also, Derek can win open game with $B$ iff he can win closed game with $B$.
\item (So thing is true)
\item This gives a \textit{structural} result about the game
\end{itemize}
\item Palindromic subsets of a group of odd order have size dividing order of $|G|$ (not true for even groups)
\item Game solved for nilpotent groups
\item Palindromic subsets are \textit{not} subgroups in general (group of order 27 and group of order 75)
\item Conjecture answer for all groups
\end{itemize}
}

\section{Reducing to the case $|G|$ odd}\label{section even}
In this section, we reduce the problem for general groups to the odd order case by proving Theorem \ref{theorem reducing to odd}. This is via the following two lemmas.  Lemma \ref{lemma coset upper bound} bounds $f(G)$ in terms of its quotient groups, and its proof serves as a good warm up to the problem.

\begin{lemma}\label{lemma coset upper bound}
If $K \lhd G$, then $f(K) f(G/K) \leq f(G) \leq |K| f(G/K).$
\end{lemma}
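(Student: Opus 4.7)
The plan is to prove the two inequalities separately: the upper bound comes from a Director strategy that pushes the game down to $G/K$, and the lower bound comes from an Explorer strategy that interleaves an optimal $K$-strategy with an optimal $G/K$-strategy.

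For the upper bound $f(G) \leq |K| f(G/K)$, I would have Director simulate the game on $G/K$. The point is that when Explorer plays $g \in G$, the induced move on the coset $\overline{x} = xK$ is either $\overline{x}\overline{g}$ or $\overline{x}\overline{g}^{-1}$, and the choice of sign is exactly Director's choice in the original game. So Director plays an optimal strategy for the quotient game, ensuring at most $f(G/K)$ distinct cosets are ever visited. Each coset contains $|K|$ elements, so at most $|K| f(G/K)$ positions are visited overall.

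For the lower bound $f(K) f(G/K) \leq f(G)$, Explorer interleaves strategies. The key observation is that playing $g \in K$ leaves the current coset unchanged, so such moves are ``invisible'' to the quotient game. Explorer's strategy is: while on a coset $C = xK$, execute an optimal $K$-strategy on $C$ (transferring via the pointed bijection $k \mapsto xk$, under which right-multiplication by elements of $K$ in $C$ corresponds exactly to right-multiplication in $K$) until at least $f(K)$ elements of $C$ have been visited; then play the next move of an optimal $G/K$-strategy, lifted to any representative in $G$, to move on to the next coset. Iterating, Explorer forces at least $f(G/K)$ distinct cosets to be visited, and within each at least $f(K)$ elements, for a total of at least $f(K) f(G/K)$ positions.

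The only thing needing care is legitimacy of interleaving: because $K$-phases do not alter the quotient position, the $G/K$-strategy sees exactly the sequence of positions it would have seen when played in isolation, so its guarantee of $f(G/K)$ visited cosets is preserved. Both component strategies terminate in finitely many moves by definition of $f$, and the counts $f(K)$ and $f(G/K)$ do not depend on the starting position by the right-translation symmetry of the game, so the fresh $K$-strategy launched from whatever element Explorer happens to land in each new coset still delivers $f(K)$ visited elements there. This is really the only subtle point; neither inequality requires any combinatorial insight beyond the observation that right-multiplication commutes with the quotient map.
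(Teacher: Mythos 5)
Your proof is correct and follows essentially the same route as the paper's: Director simulates an optimal quotient-game strategy to cap the number of visited cosets at $f(G/K)$, while Explorer interleaves an optimal $K$-strategy inside each coset (via the basepoint bijection $k \mapsto xk$) with an optimal $G/K$-strategy to guarantee $f(K)$ elements in each of $f(G/K)$ cosets; you simply spell out the interleaving and transfer details that the paper leaves implicit. (One cosmetic note: the translation symmetry you invoke is left-translation, since it is left-multiplication that commutes with the game's right-multiplication moves.)
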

\begin{proof}
To find a lower bound on $f(G)$, consider the following strategy that Explorer could use for playing in $G$:
\begin{itemize}
\item[(a)] Each time the token arrives in a new left coset $gK$, Explorer chooses only elements of $K$ (thereby staying within that coset) until she has moved the token to as many new positions within $gK$ as possible.
\item[(b)] By playing as if in $G/K$, Explorer then moves the token to a new coset if possible.
\end{itemize}
If Explorer follows this strategy, the token will visit at least $f(K)$ elements within each coset, and it will visit at least $f(G/K)$ cosets.

As for the upper bound, when playing in $G$, Director could act as if in $G/K$ with the singular goal of responding such that the token reaches at most $f(G/K)$ cosets.
\end{proof}

Next we extend a result of Gerbner \cite{magnus-derek-groups}, who proved a special case of the following.
\begin{lemma}\label{lemma equality for powers of two}
If $\Gamma$ is generated by elements whose orders are powers of $2$, $f(\Gamma) = |\Gamma|$.
\end{lemma}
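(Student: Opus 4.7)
The plan is to proceed by strong induction on $|\Gamma|$; the base case $|\Gamma|=1$ is trivial. For the inductive step, let $H$ denote the subgroup of $\Gamma$ generated by all involutions of $\Gamma$. Two quick observations: first, $H\lhd\Gamma$, because the set of involutions is closed under conjugation; and second, $H$ is nontrivial, since $\Gamma$ is a nontrivial group generated by $2$-elements and so contains some element $g$ of order $2^k$ with $k\geq 1$, whence $g^{2^{k-1}}$ is an involution of $\Gamma$.

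In Case 1, $H=\Gamma$, so $\Gamma$ is itself generated by involutions. Here I would argue directly: if $s$ is an involution, then $s^{-1}=s$, so naming $s$ forces Director to multiply by $s$. Explorer thus has complete control while she names involutions, and by successively naming involution generators she can trace any walk in the Cayley graph of $\Gamma$ on the involution generating set. Since that Cayley graph is connected, Explorer can execute a walk visiting every vertex, which gives $f(\Gamma)=|\Gamma|$.

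In Case 2, $H\subsetneq\Gamma$, and both $H$ and $\Gamma/H$ are proper and nontrivial. Both remain in the class of groups generated by $2$-elements: $H$ by construction, and $\Gamma/H$ because the images of any $2$-element generating set of $\Gamma$ retain $2$-power order and generate $\Gamma/H$. Since $|H|,|\Gamma/H|<|\Gamma|$, the inductive hypothesis yields $f(H)=|H|$ and $f(\Gamma/H)=|\Gamma/H|$, and the lower bound in Lemma~\ref{lemma coset upper bound} then gives
\[
f(\Gamma)\geq f(H)\,f(\Gamma/H)=|H|\cdot|\Gamma/H|=|\Gamma|,
\]
which combined with the trivial $f(\Gamma)\leq|\Gamma|$ closes the induction.

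The main conceptual hurdle is choosing the right normal subgroup to pull out. Natural candidates such as the commutator subgroup or a maximal normal $2$-subgroup can fail because either the piece or the quotient leaves the class (for instance, in $S_3$ the commutator $A_3$ contains no nontrivial $2$-elements, so the inductive hypothesis would not apply to it). The subgroup generated by all involutions is the right choice: it is automatically normal, automatically nontrivial once $\Gamma$ is, and lies in an extension whose other factor again has a $2$-element generating set.
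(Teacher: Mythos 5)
Your proof is correct, but it takes a genuinely different route from the paper. The paper argues directly and locally: if the token sits at $x$ and $t$ has order $2^k$, Explorer names $t, t^2, t^4, \ldots, t^{2^{k-1}}$ in turn; if Director ever deviates from pushing the token along $xt^{1-2^i} \mapsto xt^{1-2^{i+1}}$ the token lands on $xt$, and at the last step both of Director's options coincide with $xt$ because $t^{2^k}=1$. Thus Explorer can force the token from $x$ to $xt$ for \emph{any} element $t$ of $2$-power order, and since $\Gamma$ is generated by such elements she can steer the token to any prescribed element, in particular visit everything. Your argument instead inducts on $|\Gamma|$: you pull out the (normal, nontrivial) subgroup $H$ generated by involutions, handle the case $H=\Gamma$ by the observation that naming an involution forces Director's move (this is exactly Gerbner's involution-generated case cited in the introduction), and otherwise apply the inductive hypothesis to $H$ and $\Gamma/H$ together with the lower bound $f(H)f(\Gamma/H)\leq f(\Gamma)$ from Lemma~\ref{lemma coset upper bound}; all the verifications you need ($H\lhd\Gamma$, $H\neq\{1\}$, both factors remain generated by $2$-elements) are correct. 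The trade-off: your route is structurally clean and reuses the quotient machinery already in place, but it is non-constructive in the sense that the strategy is assembled recursively through the coset lemma; the paper's doubling trick is more elementary, self-contained, and yields the stronger statement that Explorer can reach any \emph{specified} target position, which is what makes the one-line deduction of the lemma possible without any induction.
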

\begin{proof}
Suppose the token is currently at $x$ and $t$ is an element of order $2^k$.  We will show that Explorer has a strategy to move the token from $x$ to $xt$, which (by our assumption on $\Gamma$) will show Explorer can ensure the token visits every position.

For Explorer to move the token from $x$ to $xt$, she performs the following algorithm:
\begin{itemize}
\item Explorer chooses $t^1$, $t^2$, $t^4$, $t^8$, \ldots , $t^{2^{k-1}}$ in order until the token is at $xt$.
\end{itemize}
To prove this strategy works, we need only show that one of Director's responses necessarily moves the token to $xt$.  If the token does not reach $xt$ before Explorer chooses $t^{2^{k-1}}$, then for each $t^{2^i}$, Director must have responded $x t^{1-2^{i}} \mapsto xt^{1-2^{i+1}}$ (otherwise, Director's first deviation from this strategy would put the token at $xt$).  The token would then be at $x t^{1-2^{k-1}}$ when Explorer declares $t^{2^{k-1}}$, and both of Director's responses to this would move the token to $xt = xt^{1-2^{k}}$ since $t$ has order $2^k$.
\end{proof}

With these two lemmas, Theorem \ref{theorem reducing to odd} follows immediately, which reduces the problem to the case $|G|$ odd.
\ignore{\begin{lemma}\label{lemma powers of 2 is normal}
Let $G$ be a finite group and $\Gamma \subseteq G$ be the subgroup of $G$ generated by all the elements whose orders are powers of $2$.  Then $\Gamma \lhd G$ and $|G / \Gamma|$ is odd.
\end{lemma}
\begin{proof}
First note that $|G| / |\Gamma|$ is odd since $\Gamma$ contains a (in fact every) Sylow $2$-group of $G$.  To see that $\Gamma \lhd G$, let $x \in \Gamma$ and write $x = t_1 t_2 \cdots t_k$, where each $t_i$ has order a power of $2$.  Then conjugating we see
\[
g x g^{-1} = g \left(\prod_{i\leq k} t_i \right) g^{-1} =  \prod_{i\leq k} (g t_i g^{-1}),
\]
and since conjugation does not change order of an element, each $g t_i g^{-1}$ has order a power of $2$.
\end{proof}

The previous three lemmas show that for any group $G$, with $\Gamma$ defined like in Lemma \ref{lemma powers of 2 is normal}, we have
\[
    f(G) = |\Gamma|f(G/\Gamma).
\]
Therefore, it suffices to find $f(G/\Gamma)$. Further, the order of the group $G/\Gamma$ is always odd. Hence we have reduced the problem to the odd order case.
}

\section{An equivalent game and $|G|$ odd}\label{section odd}
To better understand the original game, we define the \textit{open Explorer-Director game} as follows.  Director first picks a set $U \subseteq G$ and \textit{tells Explorer what that set is}.  Director's goal is to pick as large a set as possible and to always direct the token so as to stay out of $U$.  In this version, Explorer's only goal is to move the token into $U$.  We then define $\tilde{f}(G) = |G| - \max_{U} |U|$, where the maximum is taken over all sets $U$ for which Director can win this modified game.

Conveniently, the open Explorer-Director game is equivalent to the original.

\begin{proposition}\label{proposition games equivalent}
For any finite group $G$, $\tilde{f}(G) = f(G)$.
\end{proposition}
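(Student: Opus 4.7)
The plan is to prove the equality via two matching inequalities between the games, each going by a strategy-transfer argument.

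For the easy direction $f(G) \leq \tilde{f}(G)$: let $U$ be any set attaining the maximum in the definition of $\tilde{f}(G)$, and let $\sigma$ be a Director strategy winning the open game for $U$. In the closed game, Director plays the very same $\sigma$. Since $\sigma$ is simply a history-indexed rule that avoids $U$ against any Explorer whatsoever, the token never lands in $U$, so the visited set has size at most $|G| - |U|$; the bound $f(G) \leq |G| - |U| = \tilde{f}(G)$ follows immediately.

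For the reverse direction $\tilde{f}(G) \leq f(G)$, I would introduce the combinatorial notion of a \emph{closed} set: $B \subseteq G$ is closed if for every $y \in B$ and every $g \in G$ at least one of $yg$ and $yg^{-1}$ lies in $B$. A routine check shows that Director wins the open game on $U$ exactly when $G \setminus U$ is closed and contains the starting position, so $\tilde{f}(G) = \min\{|B| : B \text{ closed},\ e \in B\}$. It therefore suffices to exhibit a closed set containing $e$ of size at most $f(G)$. My plan is to take $B := V^{*}$, the set of positions visited in some run of optimal closed-game play, and show that $V^{*}$ is automatically closed: if some $y \in V^{*}$ and $g \in G$ violated the closure condition, then in an optimal closed play one could reroute Explorer to declare $g$ when the token visits $y$, forcing a new position outside $V^{*}$ into the visited set and contradicting the optimality of $V^{*}$.

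The main obstacle I anticipate is precisely this rerouting step: Explorer may visit $y$ only at a moment when the currently-visited set is a proper subset of $V^{*}$, and playing $g$ there could in principle derail subsequent play enough to shrink the final total. I expect to handle this via an extremal argument over optimal plays, picking the optimal play whose visited set is inclusion-maximal (so any deviation creating a new visit would contradict maximality). Alternatively, one can argue directly about Director's side: take $\sigma^{*}$ to be an optimal Director strategy that in addition minimizes $|R(\sigma^{*})|$, where $R(\sigma^{*})$ is the set of positions reachable against $\sigma^{*}$ over all Explorer strategies; such an $R(\sigma^{*})$ is closed essentially by construction, since from any $y \in R(\sigma^{*})$ Director's response to any $g$ again lies in $R(\sigma^{*})$. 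Either way, once closedness of some optimal $V^{*}$ (or of $R(\sigma^{*})$) is in hand, the two inequalities combine to give $f(G) = \tilde{f}(G) = \min\{|B| : B \text{ closed},\ e \in B\}$, which also conveniently sets up the structural characterizations pursued in the rest of the section.
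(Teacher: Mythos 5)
Your first inequality $f(G)\le\tilde f(G)$ is correct and is exactly the paper's argument (Director secretly commits to an optimal open-game set and plays its avoiding strategy). One side remark: the biconditional you assert --- ``Director wins the open game on $U$ exactly when $G\setminus U$ is closed'' --- is false in the direction you do not actually use. Director can win on $U$ by confining the token to a closed set strictly smaller than $G\setminus U$, while $G\setminus U$ itself violates closure at a point the token never visits: in $\Z/9\Z$ with start $0$ and $U=\{1,4\}$, the point $7\notin U$ and $g=3$ give $7+3=1\in U$ and $7-3=4\in U$, yet Director wins by keeping the token in the closed set $\{0,2,3,5,6,8\}$. Since your argument only needs ``closed complement containing the start $\Rightarrow$ Director wins,'' this is a repairable misstatement rather than a fatal one.

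The genuine gap is in the direction $\tilde f(G)\le f(G)$. In your primary plan, the visited set $V^*$ of one optimal play has the right size, but closedness is exactly what you cannot establish: if Explorer deviates at $y$ by declaring $g$, Director's replies are history-dependent, so the deviated play need not ever revisit the rest of $V^*$; its visited set therefore need not strictly contain $V^*$ (no contradiction with inclusion- or cardinality-maximality), and it still visits at most $f(G)$ positions (no contradiction with optimality). In your fallback, $R(\sigma^*)$ is indeed closed and contains the start, but the inequality you then need is $|R(\sigma^*)|\le f(G)$, and optimality of $\sigma^*$ only bounds the visited set of each \emph{individual} play; $R(\sigma^*)$ is the union over all plays and can a priori be larger, and ``minimize $|R(\sigma^*)|$ among optimal strategies'' is never shown to force it down to $f(G)$ --- indeed, the existence of an optimal Director strategy whose entire reachable set has size $f(G)$ is essentially equivalent to the proposition being proved, so this step cannot be waved through. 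The paper sidesteps all of this by arguing from Explorer's side: whenever the unvisited set $U$ satisfies $|U|>|G|-\tilde f(G)$, maximality in the definition of $\tilde f$ means Director cannot defend $U$ in the open game from the current position, so Explorer can force the token into $U$, i.e.\ force a new visit; iterating gives $f(G)\ge\tilde f(G)$ without any need to compare visited sets across different plays. To complete your write-up you would either need to adopt that Explorer-side argument or supply a genuine surgery argument producing an optimal Director strategy with small reachable set, which is missing.
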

\begin{proof}
In the original game, Director can pick any set that would win the open game (without telling Explorer) and play as if playing the open game. Thus $f(G) \leq \tilde{f}(G)$.

Now we consider the original game from Explorer's point of view. Suppose $U$ is the set of elements the token hasn't visited.  Note that if $|U| > |G| - \tilde{f}(G)$, then Explorer can pretend the set $U$ was chosen in the open game and play so as to make the token reach some element of $U$ (Explorer can do that since otherwise $U$ would be a larger set for which Director can win the open game).  Thus, Explorer can always ensure $|U| \leq |G|-\tilde{f}(G)$, which implies $f(G) \geq \tilde{f}(G)$.
\end{proof} 

It turns out that the open game is much nicer to analyze.  In fact, the optimal strategies for each player can easily be described in terms of the following.

\begin{definition}\label{def between}
Let $G$ be a group. We say that an element $b \in G$ is \textit{between} elements $a,c \in G$ iff there exists $g \in G$ such that $a = bg$ and $c = bg^{-1}$ [i.e., iff $a = bc^{-1} b$].  Moreover, we say a subset $B \subseteq G$, is \textit{closed under betweenness} iff for all $a,c \in B$, if $b$ is between $a$ and $c$, then $b \in B$.
\end{definition}

Note that this definition is valid for all groups, and in general given $a$ and $c$, there may be multiple elements between them.  For instance, if a group is generated by involutions, then a nonempty subset is closed under betweenness iff it is the entire group (since in that case, for all $a \in G$ and any involution $t$, $at$ is between $a$ and $a$).  We will show that these notions are particularly well-behaved in groups of odd order, but in general they characterize the sets for which Director can win the open game.

\begin{lemma}\label{lemma maximal sets in modified game}
For any group $G$, if $U$ is a maximal set for which Director can win the open game, then Explorer can reach every element outside of $U$, and $U$ must be closed under betweenness.  Moreover, if $B \subseteq G$ is closed under betweenness and the token does not start in $B$, then Director can win the open game by declaring $B$.
\end{lemma}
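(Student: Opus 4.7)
The plan is to derive both halves of the lemma from a single structural observation.

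Call a set $S\subseteq G$ \emph{safe} if for every $x\in S$ and every $g\in G$, at least one of $xg,\,xg^{-1}$ lies in $S$. Unpacking Definition \ref{def between} gives the key equivalence: $B\subseteq G$ is closed under betweenness if and only if $G\setminus B$ is safe. Indeed, $x$ is always between $xg$ and $xg^{-1}$ (via $g$), so both of these lying in $B$ with $x\notin B$ is precisely a failure of safety of $G\setminus B$ at $x$, and the reverse implication is just as direct.

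For the ``moreover'' direction, suppose $B$ is closed under betweenness and the token starts outside $B$. The natural Director strategy upon declaring $B$ is: whenever the token is at $x\notin B$ and Explorer plays $g$, Director moves to whichever of $xg,\,xg^{-1}$ lies outside $B$, which exists by safety of $G\setminus B$. A one-line induction on rounds shows the token never enters $B$.

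For the main direction, let $U$ be a maximal winning set for Director and fix a winning strategy $\sigma$. Define $T\subseteq G$ to be the set of token positions reachable from the start, taken over \emph{all} Explorer play sequences, when Director plays $\sigma$. We observe: (i) $T$ contains the start; (ii) $T\cap U=\emptyset$, since $\sigma$ keeps the token out of $U$; and (iii) $T$ is safe, because for any $x\in T$ and any $g\in G$, Explorer is free to play $g$ from $x$ along some continuation of the play reaching $x$, and the resulting $\sigma$-response in $\{xg,\,xg^{-1}\}$ lies in $T$ by construction. The key equivalence then gives that $G\setminus T$ is closed under betweenness, so the ``moreover'' direction (whose hypothesis is satisfied by (i)) shows that Director can win the open game declaring $G\setminus T$. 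Since (ii) yields $U\subseteq G\setminus T$, maximality of $U$ forces $G\setminus T=U$, i.e.\ $T=G\setminus U$. This shows $U$ itself is closed under betweenness, and simultaneously the identity $T=G\setminus U$ means every element outside $U$ appears as the token's position along some legal play against $\sigma$, giving Explorer a sequence of moves forcing a visit to any prescribed $v\notin U$.

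The step I expect to require the most care is the verification of (iii); it is crucial that $T$ is defined via the union over \emph{all} Explorer play sequences rather than a single one, so that for each $x\in T$ the assertion ``Explorer may play $g$ from $x$ in some continuation of $\sigma$'' is genuinely available. Once safety of $T$ is pinned down, the remainder is formal bookkeeping around the maximality of $U$.
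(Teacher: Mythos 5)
Your proof is correct, and the first half takes a genuinely different route from the paper. The paper argues in two steps: first, if some $y\notin U$ were unreachable, Director could win the open game with $U\cup\{y\}$, contradicting maximality (an implicit determinacy-of-reachability-games step); then it derives betweenness-closure from reachability by noting that at a reachable $b\notin U$ Director must still have a $U$-avoiding answer to any $g$. You instead fix a winning strategy $\sigma$, form the set $T$ of positions reachable against it, verify that $T$ is ``safe'' so that $G\setminus T$ is closed under betweenness, observe $G\setminus T$ is winnable and contains $U$, and invoke maximality once to get $T=G\setminus U$ -- obtaining both conclusions simultaneously. Your safe/betweenness duality and your ``moreover'' argument are exactly the paper's second paragraph. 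What your route buys: it sidesteps the determinacy assertion the paper leaves implicit, and it automatically handles the point (glossed in the paper) that the position $b$ must be reached along a play consistent with a winning strategy in order for Director to still have a $U$-avoiding response. You were also right that (iii) is the delicate step, and your treatment of history-dependent $\sigma$ there is fine.

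One caveat on the last sentence: what your argument literally yields is that for every winning strategy $\sigma$ of Director, each $v\notin U$ is visited along some play against $\sigma$; this is the natural reading of ``Explorer can reach'' in the open game, where Director is committed to avoiding $U$, and it is all that is needed downstream (via Proposition~\ref{proposition closed under betweenness iff palindromic}). But the word ``forcing'' overstates it slightly: a single precomputed move sequence does not force a visit to $v$ against an arbitrary Director, who may dodge $v$ by entering $U$. To get the stronger statement that Explorer can force the token into $U\cup\{v\}$ against arbitrary play, one still needs the determinacy-style exchange the paper uses (if Explorer could not force this, Director could win the open game with $U\cup\{v\}$, contradicting maximality). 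Either weaken the phrasing or append that one line.
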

\begin{proof}
First, if there were an element $y \notin U$ that Explorer could not reach, then Director could win the open game with $U \cup \{y\}$ (violating maximality).  To see that $U$ is closed under betweenness, suppose $b \notin U$.  We know Explorer can reach $b$, and if she chooses $g$, Director must have some response that keeps the token from $U$.  But the token reaches either $bg$ or $b g^{-1}$, so these cannot both be in $U$, and hence $b$ cannot be between any two elements of $U$.

As for the second part of the claim, suppose $B$ is closed under betweenness and the token is at a point $x \notin B$.  Then Director can respond to any $g$ so as to keep the token out of $B$ otherwise, both $xg$ and $xg^{-1}$ would be in $B$, which (because $B$ is closed under betweenness) would require $x \in B$ as well.
\end{proof}

This lemma provides an analog of Theorem \ref{theorem magnus derek palindrome} (and Corollary \ref{corollary from main theorem odd}) valid for all groups $G$.  However, in the case of $|G|$ odd the situation is much better behaved.  In fact, we establish Theorem \ref{theorem magnus derek palindrome} simply by combining Lemma \ref{lemma maximal sets in modified game} with the following.

\begin{proposition}\label{proposition closed under betweenness iff palindromic}
Let $G$ be a group of odd order.  A set $B \subseteq G$ is closed under betweenness iff there exists a twisted subgroup $P \subseteq G$ and some $g \in G$ such that $B = gP$.
\end{proposition}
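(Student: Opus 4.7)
The plan hinges on two observations tailored to odd order $G$. First, since squaring is a bijection on $G$, every element has a unique square root, and consequently for each $q \in G$ the map $\Psi_q : G \to G$ defined by $\Psi_q(x) = xq^{-1}x$ is a bijection with inverse $\Psi_q^{-1}(y) = (yq^{-1})^{1/2}\,q$. This inverse is exactly the midpoint $m(y,q)$, the unique element between $y$ and $q$; so ``closed under betweenness'' is the same as ``closed under the binary operation $m$.'' Second, betweenness is translation-invariant ($b$ between $a,c$ iff $gb$ between $ga,gc$), so after left-translating it suffices to prove: a subset $P \subseteq G$ containing $e$ is closed under $m$ iff $P$ is a twisted subgroup.

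For $(\Leftarrow)$, assume $P$ is a twisted subgroup. A direct induction using $p^{2k}=p^k \cdot e \cdot p^k$ and $p^{2k+1} = p^k \cdot p \cdot p^k$ shows $\langle p\rangle \subseteq P$ for every $p \in P$; in particular $P$ is closed under inverses. Now for any $q \in P$, both $x \in P$ and $q^{-1} \in P$ yield $\Psi_q(x) = xq^{-1}x \in P$ by the twisted-subgroup axiom, so $\Psi_q$ maps the finite set $P$ into itself. Since $\Psi_q$ is a bijection on $G$, $\Psi_q|_P$ must also be a bijection on $P$, and hence $m(p,q) = \Psi_q^{-1}(p) \in P$ for every $p \in P$.

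For $(\Rightarrow)$, assume $e \in P$ and $P$ is closed under $m$. The main obstacle is establishing closure under inverses without yet having the twisted structure. Since $m(e,p) = p^{1/2}$, $P$ is closed under square roots. More generally, within the cyclic subgroup $\langle p \rangle$ of odd order $n$, the midpoint operation coincides with the arithmetic midpoint $(a,b)\mapsto (a+b)/2$ on $\Z/n\Z$, and the problem reduces to showing that the closure of $\{0,1\}$ under this operation is all of $\Z/n\Z$. This holds because repeated midpoints realize every value $k\cdot 2^{-j} \pmod{n}$ with $0\le k\le 2^j$, and since $2^{-j}$ is a unit modulo the odd $n$, taking $2^j \ge n$ covers every residue. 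Hence $\langle p\rangle \subseteq P$ and in particular $p^{-1}\in P$. With inverses in hand, the bijection argument runs in reverse: for $a, b \in P$, set $q = b^{-1} \in P$; closure of $P$ under $m$ gives $\Psi_q^{-1}(P) = \{m(p,q) : p \in P\} \subseteq P$, and finiteness upgrades this to $\Psi_q(P) = P$, so $aba = \Psi_{b^{-1}}(a) \in P$.
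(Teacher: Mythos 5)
Your proof is correct, and although it shares the paper's skeleton (unique square roots and unique midpoints in odd order, translation-invariance of betweenness, and a cyclic-subgroup argument for inverse-closure), the central mechanism is genuinely different. The paper proves the reverse implication by explicitly computing the midpoint $b(gx,gy)=gx(y^{n-1}x)^{m-1}$ and recognizing it as $g$ times a palindromic word in elements of $P$, and it proves the forward implication via the claim that $c,ct\in U$ forces $ct^{-1}\in U$, applied twice to manufacture $axyx$. You replace both of these computations with a single duality: for $q$ in the set, $\Psi_q(x)=xq^{-1}x$ is a bijection of $G$ whose inverse is exactly ``take the midpoint with $q$,'' so on a finite set that is closed under inverses, closure under the twisted operation and closure under midpoints each force the other by injectivity plus finiteness. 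This buys a shorter and more uniform argument, with the same pigeonhole trick serving both directions and no word manipulations; what it gives up is the explicit palindromic description of the midpoint, which the paper's computation provides and which makes the link to palindromes (and to concrete Director strategies) visible. Your inverse-closure step --- passing to $\Z/n\Z$ and generating all residues from $\{0,1\}$ by repeated averaging, using that $2^{-j}$ is a unit modulo the odd $n$ --- is essentially the paper's ``averages of even numbers'' argument in modular form, and it is sound. (Like the paper's proof, yours implicitly assumes $B$ is nonempty when translating so that the identity lies in the translate; the empty set is vacuously closed under betweenness but is not a coset.)
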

\begin{proof}
First, note that in a group of odd order, the map $g \mapsto g^2$ is a bijection (as iterating it sufficiently often results in the identity because $2$ and $|G|$ are relatively prime).  Thus, for every $x \in G$, there is exactly one $y$ (called \textit{the square root} of $x$) such that $x = y^2$.  Similarly, for all $a, c \in G$, there is exactly one element, which we denote $b = b(a,c)$, that is between $a$ and $c$.  This is because we have
\[
a = bg \text{ and } c = bg^{-1} \Longleftrightarrow b = cg \text{ and } g^2 = c^{-1}a,
\]
and $c^{-1}a$ has exactly one square root (since $|G|$ is odd).  With this, we are now able to prove a set is closed under betweenness iff it is a coset of a twisted subgroup.

\paragraph*{Forward implication:} Suppose $U$ is closed under betweenness.  If $c$ and $ct$ are in $U$, we claim that $c t^{-1} \in U$.  To prove this, for each $x \in G$, let $S(x)$ denote the square root of $x$, and let $S^k$ denote $S$ composed with itself $k$ times.  Because $c S(x) = b(c, cx)$, we have $c S^{k} (t) \in U$ for all $k \geq 0$.  But since $S$ is the inverse map of $x \mapsto x^2$, finiteness gives us $U \supseteq \{c S^{k} (t) \ : \ 0 \leq k \in \mathbb{Z}\} = \{c t^{2^k} \ : \ 0 \leq k \in \mathbb{Z}\}$.  After seeing that $c t^{2^k} \in U$ for all $k$, we get that $c t^j \in U$ for all integers $j \geq 1$ by noting that $\{j \in \mathbb{Z} \ : \ c t^j \in U\}$ is closed under taking averages of even numbers.

\ignore{
Suppose $m$ is the order of $t$ and that $2^r$ is the largest power of 2 which does not exceed $m$. Let $s \leq r$ be an integer and observe that $ct^k \in U$ for all integers $k$ between $2^{s-1}$ and $2^s$ --- simply keep taking the \lq\lq between\rq\rq\   elements. Hence $\{ct^k\}_{k = 0}^{2^r} \subseteq U$, i.e.~more than half of all the elements $ct^k$ are contained in $U$. For each integer $q$ between $1$ and $\frac{m+1}{2}$, we have 
\[
b(ct^{q-1}, ct^q) = ct^{q-1}S(x) =: ct^k
\] 
where $k > \frac{m+1}{2}$. Further, for the $\frac{m+1}{2}$ different choices of $q$ we get $\frac{m+1}{2}$ different values of $k$. We are done, since $\frac{m+1}{2} < 2^r$ and hence $ct^k \in U$ for all these choices of $q$.
}

We now show how the rest of the proof follows from this.  Fix any $a \in U$, and let $x, y \in a^{-1} U$ be arbitrary.  Because $a, ay \in U$, our claim implies $ax(x^{-1} y^{-1}) = a y^{-1} \in U$.  Since $ax, ax (x^{-1} y^{-1}) \in U$, our claim again implies $axyx = ax (x^{-1} y^{-1})^{-1} \in U$.  This therefore shows that $a^{-1} U$ is closed under palindromes, as desired.

\paragraph*{Reverse implication:} Suppose $P \subseteq G$ is a twisted subgroup of $G$, and let $g \in G$ and $x, y \in P$ be arbitary.  Suppose $y$ has order $n$, and that the order of $y^{-1}x$ is $2m-1$. Then $(y^{-1}x)^m$ is the square root of $y^{-1}x = (gy)^{-1} gx$, and thus we have
\[
b(gx, gy) = gy (y^{-1}x)^m = g x (y^{-1} x)^{m-1} = g x (y^{n-1} x)^{m-1}.
\]
Moreover, since $P$ is closed under forming palindromes, we know $g x (y^{n-1} x)^{m-1} \in gP$ since the word $x (y^{n-1} x) ^{m-1} = x y^{n-1} x y^{n-1} \cdots y^{n-1} x$ is a palindrome using elements of $P$.  Thus $gP$ is closed under betweenness.
\end{proof}

\ignore{
\begin{lemma}\label{powerlemma}
Let $G$ be a group of odd order and let $U \subseteq G$ be a set with the property $x,y\in U \Rightarrow b(x,y) \in U$. Then the following holds: if $a \in U$ and $ax \in U$ then $ax^k \in U$ for all integers $k$. 
\end{lemma}
\begin{proof}
Denote the square root of $x$ by $s(x)$ and $s^k$ the composition of $s$ with itself $k$ times. Observe that $s$ is the inverse of the map $x \mapsto x^2$. Let $x \in U$ and $m$ be the order of $x$. 

Consider the sequence $(x^{2^i})_{i \in \N}$. It is periodic since the sequence $(2^i \text{ mod } m)_{i \in \N}$ is periodic; call its period $p$. Then the finite sequence $(s^i(x))_{i = 0}^p$ is obtained by reversing the order of the finite sequence $(x^{2^i})_{i=0}^p$. To see that, note that 
\[
x^{2^i} = s^{-i}(x) = s^{p-i}(x),
\]
since $x\mapsto x^2$ is the inverse of the map $s$, as noted above. 

Now, since $as(x) = b(a,ax)$ we have that the sequence $(as^i(x))_{i = 0}^p$ is fully contained in $U$. Hence the sequence $(ax^{2^i})_{i=0}^p$ is contained in $U$. Suppose $2^r$ is the largest power of 2 which does not exceed $m$. Let $t \leq r$ be an integer and observe that $ax^k \in U$ for all integers $k$ between $2^{t-1}$ and $2^t$ --- simply keep taking the \lq\lq between\rq\rq\   elements. Hence $\{ax^k\}_{k = 0}^{2^r} \subseteq U$, i.e.~more than half of all the powers $ax^k$ are contained in $U$. For each integer $q$ between $1$ and $\frac{m+1}{2}$, we have 
\[
b(ax^{q-1}, ax^q) = ax^{q-1}s(x) =: ax^k
\] 
where $k > \frac{m+1}{2}$. Further, for the $\frac{m+1}{2}$ different choices of $q$ we get $\frac{m+1}{2}$ different values of $k$. We are done, since $\frac{m+1}{2} < 2^r$ and hence $ax^q \in U$ for all these choices of $q$. 
\end{proof}
 
\begin{corollary}\label{xyx corollary}
Let $U$ and $G$ be as in Lemma \ref{powerlemma}. Then there exists $a\in G$ such that $U = aP$ where $P$ is a twisted subgroup of $G$.
\end{corollary}
\begin{proof}
Fix some $a \in U$. We want to show that the set $a^{-1}U$ is a twisted subgroup of $G$. Note that $1\in a^{-1}U$ and take some $x,y \in a^{-1}U$. By Lemma \ref{powerlemma}, the fact that $a,ay \in U$ implies that 
\[
ax(x^{-1}y^{-1}) = ay^{-1} \in U. 
\]
Since $ax,ax(x^{-1}y^{-1}) \in U$, Lemma \ref{powerlemma} again implies that 
\[
axyx = ax(x^{-1}y^{-1})^{-1} \in U,
\]
i.e. $xyx \in a^{-1} U$.
\end{proof}
To prove that that our claim (that $f(G) = |G| - |P|$ where $P$ is a maximum proper twisted subgroup of $G$) is true, it now only remains to show that for any twisted subgroup of $G$, Director can win the game by picking a set of the same size and win:
\begin{proposition}\label{prop derek can win pal}
Let $P\subsetneq G$ be a twisted subgroup of an odd order group $G$. Then there exists $a\in G$ such that Director can prevent the token from reaching $aP$.
\end{proposition}
\begin{proof}
Suppose $b$ is the element where the token starts, and let $a$ be some element such that $b \notin aP$. First we show that $b(ax,ay) = ap$, where $p \in G$ is an $\{x,y\}$-palindrome. Indeed, suppose $n$ is the order of $y$ and $2m-1$ is the order of $y^{-1}x$. Then $(y^{-1}x)^m$ is the square root of $y^{-1}x$ and 
\begin{align*}
    b(ax,ay) &= ay((ay)^{-1}ax)^{m} \\
    &= ay(y^{-1}a^{-1}ax)^m \\
    &= ay(y^{-1}x)^m \\
    &= ax\underbrace{y^{-1}x\cdots xy^{-1}x}_{m-1 \text{ factors of } y^{-1}x} \\ 
    &= ax\underbrace{y^{n-1}x\cdots xy^{n-1}x}_{m-1 \text{ factors of } y^{n-1}x} \\
    &= ap
\end{align*}
where $p = x(y^{n-1}x)^m$ is clearly an $\{x,y\}$-palindrome. Therefore, $aP$ satisfies the property that for any $x,y \in P$, $b(ax,ay) \in aP$. Taking the contrapositive, we see that if $b(ax,ay) \notin aP$, then $ax \notin P$ or $ay \notin P$. 

If the token is at $g \in G \setminus aP$, then for any $h \in G$,
\[
    g = b(gh,gh^{-1}).
\]
Thus, either $gh \notin aP$, or $gh^{-1} \notin aP$. Hence, if the token is currently at an element $g$ outside of $aP$, then whatever element $h$ Explorer chooses, Director can send the token to an element outside $aP$.
\end{proof}
From Corollary \ref{xyx corollary} and Proposition \ref{prop derek can win pal} (and its proof), we conclude Theorem \ref{theorem magnus derek palindrome} and its Corollary (\ref{corollary from main theorem odd}).
}

\section{Twisted subgroups in groups of odd order}\label{section palindromes}
We now turn our attention to twisted subgroups $P \subseteq G$ where $|G|$ is odd.  In particular, we prove Theorem \ref{theorem palindromic divides}, that $|P|$ divides $|G|$.  Though originally appearing in \cite{loops1}, we offer a self-contained proof here for completeness.

\begin{proof}[Proof of Theorem \ref{theorem palindromic divides}]
Let $G$ be a group of odd order and $P \subseteq G$ be a twisted subgroup of $G$.  We will prove $|P|$ divides $|G|$ by induction on $|G|$.  The case $|G| = 1$ is trivial, so let $|G| > 1$ and assume the claim holds for all groups of order $2k + 1 < |G|$.

We may assume that $P$ generates $G$ since otherwise we can apply our argument to the subgroup generated by $P$.  Let $\mathcal{W}$ denote the set of words in the alphabet $P$.  For $w = (w_1, w_2, \ldots, w_l) \in \mathcal{W}$, let $\overline{w} = (w_l , w_{l-1} , \ldots, w_1)$ denote the reversal of $w$, and let $|w| = w_1 w_2 \cdots w_l \in G$ denote the evaluation of the word $w$ viewed as a product of elements of $G$.  Clearly, for any words $u,v \in \mathcal{W}$, we have $\overline{u \cdot v}= \overline{v} \cdot \overline{w}$ and $|u\cdot v| = |u| |v|$.

In studying the notion of \textit{palindromic width of a group}, Fink and Thom \cite{fink} considered the set $H = \{ |\overline{w}| \in G \ : \ w \in \mathcal{W}, \ |w| = 1_{G}\}$---where $1_{G} \in G$ denotes the identity element of $G$---and they showed that $H$ is a normal subgroup of $G$.  We also know that $H \subseteq P$ since every $g \in H$ is of the form $g = |\overline{w}| = |\overline{w}| 1_{G} = |\overline{w}| |w| = |\overline{w} w| \in P$.

Because $H \subseteq P$ and $P$ is a twisted subgroup, we claim $HP \subseteq P$.  To see this, let $h \in H$ and $p \in P$ be arbitrary.  Because $|G|$ is odd, we know we $p = q^2$ for some $q \in G$ and moreover $q \in P$ since $q$ is a power of $p$.  Since $H \lhd G$, we know $q^{-1} h q \in H \subseteq P$, which implies $hp = h q^2= q (q^{-1} h q) q \in P$.  Thus, $HP \subseteq P$, and so $HP = P$.

Let $\pi : G \to G/H$ be the canonical projection map.  The image of $P$ under this map is a twisted subgroup (of $G/H$) since $\pi(a)\pi(b)\pi(a) = \pi(aba) \in \pi(P)$.  Thus, if $H \neq \{1_G \}$, we could apply induction to say that $|\pi(P)|$ divides $|G/H|$, and since $P = HP$, we would have $|P| = |H| \cdot |\pi(P)|$ divides $|G|$.

We may therefore assume that $H = \{1_G\}$.  For two words $u,v \in \mathcal{W}$, we have
\[
|u| = |v| \quad \Longleftrightarrow \quad  u_1 u_2 \cdots u_n = v_1 v_2 \cdots v_m \quad \Longleftrightarrow \quad u_1 u_2 \cdots u_n v_m ^{-1} v_{m-1} ^{-1} \cdots v_{1} ^{-1} = 1_G.
\]
Because $H = \{1_G\}$, we know that $|w| = 1_G$ iff $|\overline{w}| = 1_G$.  Thus, continuing the above
\[
|u| \cdot v_m ^{-1} v_{m-1} ^{-1} \cdots v_{1} ^{-1} = 1_G \quad \Longleftrightarrow \quad v_1 ^{-1} v_2 ^{-1} \cdots v_{m} ^{-1} \cdot |\overline{u}| = 1_G  \quad \Longleftrightarrow \quad |\overline{u}| = |\overline{v}|.
\]
Therefore, since $|u| = |v|$ iff $|\overline{u}| = |\overline{v}|$, this gives a well-defined map $\psi : G \to G$ via $\psi(|w|) = |\overline{w}| ^{-1}$.  Moreover, it is easy to see that $\psi$ is an order 2 automorphism of $G$.

Let $N = \{g \in G \ : \ \psi(g) = g\} \lhd G$.  Note that $\psi(p) = p^{-1}$ for all $p \in P$.  Thus for $m, n \in N$ and $p,q \in P$, if $mp = nq$ then $n=m$ and $p=q$ (by applying $\psi$ to both sides, rearranging, and using the injectivity of the map $x \mapsto x^2$ [valid since $|G|$ is odd]).  On the other hand, for arbitrary $g \in G$, note that $q = (\psi(g))^{-1} g$ is an element of $P$, so we can write $q = p^2$ for some $p \in P$.  Thus $g = \psi(g) p^2$, and right-multiplying both sides of this by $p^{-1}$ and applying $\psi$, we see that $\psi(g) p \in N$, and so $g \in NP$.

Combining these two observations, we have that every element of $G$ is uniquely expressible as $np$ for $n \in N$ and $p \in P$, implying $|P| = |G| / |N|$, as desired.
\end{proof}
\ignore{
\begin{definition}\label{def reversible}
Let $G$ be a group and fix a generating set $X$. Let $w$ be a word in the alphabet $X \cup X^{-1}$.  Denote by $|w|$ the corresponding group element. Define $\overline{w}$ as the word obtained by reversing the word $w$.  We say that a group is $X$-\textit{reversible} iff it satisfies the property 
\[
|\overline{w_1}| = |\overline{w_2}| \Leftrightarrow |w_1| = |w_2|
\] 
for all words $w_1,w_2$. 
\end{definition}
\begin{definition}
Suppose $G = \langle X\rangle$ where $X\subseteq G$. We say that $g\in G$ is an \emph{$X$-palindrome} (or, if there is no confusion about the generating set, simply \emph{palindrome}), if there exists a word $w$ in $X$ such that $w = \overline{w}$ and $g = |w|$.
\end{definition}
\begin{remark}
Let $G = \langle X \rangle$ be a group and suppose that $G$ is $X$-reversible. Let $g\in G$. If $w_1$ and $w_2$ are words such that $|w_1| = |w_2| = g$, then we know that $|\overline{w_1}| = |\overline{w_2}|$. Therefore we will use the notation $\overline{g}$ for the unique group element which is obtained by reversing any word that gives $g$. In general however, many different group elements can be obtained from $g$ by writing it as different words and reversing them.  
\end{remark}

Let $G= \langle X\rangle$ be a group. Fink and Thom \cite[Proposition 3]{fink} show that the set $H = \{|\overline{w}| \in G: w \text{ is a word in } X \text{ and } |w| = 1\}$ is a normal subgroup of $G$. If $G$ is not $X$-reversible, then that subgroup is non-trivial. This can be useful when making inductive arguments.
 
\begin{lemma}\label{lemma N}
Let $G = \langle X \rangle$ be a group of odd order and suppose that $G$ is $X$-reversible, let $N = \{g \in G: \overline{g}= g^{-1}\}$ and let $P(X)$ be the set of $X$-palindromes of $G$. Then every element of $G$ can be written uniquely as $pn$ where $p \in P(X)$ and $n \in N$.
\end{lemma}
\begin{proof}
Since every palindrome is the square of a palindrome, we can write $g\overline{g} = p^2$ with $p \in P(X)$. Hence $g = p(p\overline{g}^{-1})$. Write $k = p\overline{g}^{-1}$. Then $p^2 = g\overline{g} = pk\overline{pk} = pk\overline{k}\overline{p} = pk\overline{k}p$ implying $k\overline{k} = 1$ and hence $k\in N$. 

For uniqueness, suppose $pn = qm$ where $p,q \in P(X)$ and $n,m \in N$. Then $\overline{pn} = \overline{qm}$, i.e.~$\overline{n}p = \overline{m}p$. Thus $pn\overline{n}p = qm\overline{m}q$ so $p^2 = q^2$ and hence $p = q$. This in turn implies that $n = m$. 
\end{proof}
 
Using this Lemma, we can prove
\begin{theorem}\label{theorem palindromes divide}
The number of $X$-palindromes of an odd order group $G = \langle X \rangle$ divides the order of the group.
\end{theorem}
\begin{proof}
Suppose $G$ is $X$-reversible. First we show that the set $N \subseteq G$ defined in Lemma \ref{lemma N} is a subgroup of $G$. Indeed, if $n_1,n_2 \in N$, then 
\[
\overline{(n_1n_2)} = \overline{n_2}\ \overline{n_1} = n_2^{-1}n_1^{-1} = (n_1n_2)^{-1}
\] 
and
\[
\overline{(n_1^{-1})} = (\overline{n_1})^{-1} = (n_1^{-1})^{-1},
\]
as desired. Now the result follows from Lemma \ref{lemma N}.

If $G$ is not $X$-reversible, then take the normal subgroup 
\[
H = \{|\overline{w}| \in G: w \text{ is a word in } X \text{ and } |w| = 1\} \lhd G
\] 
and consider the group $G/H$. If $G/H$ is $X/H$-reversible, then the number of $X/H$-palindromes in $G/H$ divides $|G|/|H|$ as shown above, and for any $X/H$- palindrome $rH$ of $G/H$ we obtain $|H|$ different $X$-palindromes $r|w\overline{w}| = |w|r|\overline{w}|$ of $G$ (where $|w\overline{w}| = |\overline{w}| \in H$, since $|w| = 1$). If $G/H$ is not $X/H$-reversible, we repeat and take the quotient by the group 
\[
H_2 = \{|\overline{w}| \in G/H: w \text{ is a word in } (X/H) \text{ and } |w| = 1\}.
\] 
Eventually, this process must stop since we start with a finite group.  
\end{proof}
We can now prove Theorem \ref{theorem palindromic divides}.
\begin{proof}[Proof of Theorem \ref{theorem palindromic divides}.]
Since $P$ is the set of $P$-palindromes of $\langle P \rangle$, by Theorem \ref{theorem palindromes divide} we have that $|P|$ divides $|\langle P \rangle |$, which in turn divides $|G|$.
\end{proof}
The condition that $|G|$ be odd cannot be removed from Theorem \ref{theorem palindromic divides}.  For instance, in a vector space over $\mathbb{F}_2$, every subset containing the identity element is a twisted subgroup.

}

\section{Conclusion and further research}\label{section conclusion}
From Theorems \ref{theorem magnus derek palindrome} and \ref{theorem palindromic divides}, we see that for any group $G$ of odd order
\[
f^* (|G|) \leq f(G) \leq |G|-\max_{H \lneqq G} |H|,
\]
where the maximum is taken over all proper subgroups of $G$, and the most natural open question would be to improve these bounds. It is tempting to conjecture that for $|G|$ odd, $f(G)$ is actually equal to one of these two bounds, but it's not clear to us which of these bounds is more likely to be the truth.

\ignore{
Now suppose $G$ is any nilpotent group (with $|G|$ not necessarily odd). Then $(*)$ holds with $G$ replaced by $G/\Gamma$, which is also a nilpotent group and hence has a subgroup of any given order dividing its order (see \cite[Theorem 3, p.~191]{dummitfoote}). Thus we have equality everywhere in $(*)$. Since $G$ is nilpotent, $\Gamma$ is its unique Sylow 2-subgroup, and thus the smallest odd prime dividing $|G|$ and $|G/\Gamma|$ is the same. We conclude that
\[
f(G) = |\Gamma|f(G/\Gamma) = |\Gamma|f^*(|G/\Gamma|) = f^*(|G|).
\]
Now we have proven Theorem \ref{theorem nilpotent any not nec odd}.

In fact, we can conclude that for any group $G$ of odd order with a subgroup of index $p$ where $p$ is the smallest prime dividing $|G|$,
\[
f(G) = f^*(|G|).
\]}

Of course, for $|G|$ odd, determining $f(G)$ amounts to understanding for which values $k$ a twisted subgroup of size $k$ exists.  Let $L(G) = \{|P| \ : \ P \subsetneqq G, \ \text{$P$ a twisted subgroup}\}$.  We know $k$ divides $|G|$ for all $k \in L(G)$, and that $f(G) = |G| - \max (L(G))$.  While it is important to note that twisted subgroups need not be subgroups (e.g., there are small counterexamples in non-abelian groups of order 27 and 75), it could perhaps be the case that $L(G) = \{ |H| \ : \ H \lneqq G\}$ (and thus, our upper bound on $f(G)$ would be equality).  Perhaps an equally daring conjecture would be that $L(G) = \{ d \ : \ \text{$d$ divides $|G|$}\}$, in which case we would have $f(G) = f^*(|G|)$.  Admittedly, we would not be particularly surprised if both of these conjectures were false, but we were unable to disprove either.

Though not directly related to understanding the game, the case of $|G|$ even is known to be particularly more nuanced (see for instance \cite{twistedSurvey}).  For instance, in the abelian group $(\mathbb{Z}/2\mathbb{Z})^n$, \textit{every} subset containing the identity element is a twisted subgroup.  As a more mild example, in the dihedral group on $2n$ elements, the set $\{f\} \cup \{r^k \ : \ k \in \mathbb{Z}\}$ consisting of $n+1$ elements is also a twisted subgroup.  Thus, while the study of twisted subgroups for groups of even order is interesting in its own right, it certainly seems to be much more difficult.

\bibliographystyle{plain}
\bibliography{references}

\end{document}